\documentclass[12pt,a4paper]{amsart}
\usepackage{amsfonts,amsmath,amssymb,amsthm,cmtiup} 
\usepackage{mathrsfs}
\usepackage{enumerate}

\newtheorem*{theorem*}{Theorem}
\newtheorem{lemma}{Lemma}

\theoremstyle{definition} 
\newtheorem{remark}{Remark}
\newtheorem*{definition*}{Definition}

\newcommand{\zero}{{\mathbf 0}}

\begin{document}

{
\renewcommand*{\thefootnote}{$\star$}

\title[Closed discrete basis]{Any countable topological $\mathbb F_p$-vector space\\ 
has a closed discrete basis$^\star$}
\footnotetext[0]{This work was financially supported by the Russian Science Foundation, 
grant~22-11-00075-P.} 

}

\author{Ol'ga Sipacheva}

\email{osipa@gmail.com}

\address{Department of General Topology and Geometry, Faculty of Mechanics and  Mathematics, 
M.~V.~Lomonosov Moscow State University, Leninskie Gory 1, Moscow, 199991 Russia}

\begin{abstract}
It is proved that any countable topological vector space over a finite field $\mathbb F_p$ or, equivalently, any 
countable Abelian topological group of prime exponent has a closed discrete basis. 
\end{abstract}

\keywords{countable topological $\mathbb F_p$-vector space, countable Abelian topological group of prime 
exponent, closed discrete basis}

\subjclass[2020]{22A05, 54F45}

\maketitle

Let $L$ be a Hausdorff topological vector space over the field $\mathbb F_p$, where $p$ is an arbitrary prime, 
and let $E$ be any basis of $L$. Then, algebraically, $L$ is the direct sum $\bigoplus_{e\in E} \langle e\rangle 
$, where $\langle e\rangle$ denotes the linear span of $e$ in $L$ (which is isomorphic to $\mathbb F_p$), and any 
element $v$ of $L$ can be uniquely (up to the order of terms) represented as a linear combination of elements of 
$E$ with nonzero coefficients in $\mathbb F_p$. We refer to this linear combination as a \emph{decomposition} of 
$v$ in $E$ and to the number of terms in it as the \emph{length} of~$v$ in $E$. Note that any Abelian topological 
group of exponent $p$ can be treated as a topological $\mathbb F_p$-vector space, because any such group is a 
direct sum of copies of the cyclic group $\mathbb Z/p\mathbb Z$ of order $p$ (see, e.g., 
\cite[Theorem~8.5]{Fuchs}); clearly, the continuity of the group operation implies that of the linear operations. 
For such a group, linear combinations of elements and bases are defined in the same way as for vector spaces. 

The topology of $L$ may induce an arbitrary Tychonoff topology on $E$. Indeed, $L$ can be treated as the free 
group $F_{\mathscr V}(E)$ on $E$ in the variety $\mathscr V$ of Abelian groups of exponent $p$. It is well known 
that, given any Tychonoff topology $\tau$ on $E$, there exists a group topology on $F_{\mathscr V}(E)$ that 
induces the topology $\tau$ on $E$ (see, e.g., \cite{Morris}). As mentioned, any group topology on $F_{\mathscr 
V}(E)=L$ is a vector space topology. 

However, if $L$ is countable, then, whatever the induced topology of $E$ (and whatever the Hausdorff vector space 
topology of $L$), there always exists a basis $E''$ in $L$ which is closed and discrete in $L$. This is the main 
result of this paper. 

\subsection*{Notation and definitions}

Let $L$ be a Hausdorff countable topological vector space over the field $\mathbb F_p$, where $p$ is a prime; it 
is assumed to be fixed throughout. In what follows, we treat $L$ as an Abelian topological group of exponent~$p$. 

By $\mathbb N$ we denote the set of positive integers and by $|X|$, the cardinality of a 
set~$X$. The linear span of $A\subset L$ is denoted by $\langle A\rangle$. We assume that $\langle 
\varnothing\rangle$ is the trivial subspace. All notions related to topological and metric spaces can 
be found in~\cite{Engelking} and to topological groups and vector spaces, in~\cite{AT}. We assume all topological 
groups and vector spaces under consideration to be Hausdorff.

A \emph{norm} on a group $G$ with neutral element $e$ is a function $\lVert\boldsymbol\cdot\rVert\colon G\to 
\mathbb R$ with the following properties: 
\begin{enumerate} 
\item $\|g\|=0$ if and only if $g=e$; 
\item $\|g^{-1}\| = \|g\|$; 
\item $\|gh\|\le \|g\|+\|h\|$ (the triangle inequality for norms). 
\end{enumerate} 
We say that a topology of a topological group $G$ is \emph{generated} by a norm $\lVert \boldsymbol\cdot \rVert$ 
if the sets $\{x\in G: \|x\|< \varepsilon\}$, $\varepsilon > 0$, form a neighborhood base at $e$ for the topology 
of~$G$.

Note that when $G$ is a vector space over a field $\mathbb F$ (e.g., $G=L$), this definition of a norm differs 
from the conventional definition of a norm on a vector space (the condition $\|\lambda v\| = |\lambda|\cdot 
\|v\|$ is missing). In what follows, we always understand norms in the sense defined above.  

Any norm $\lVert \boldsymbol\cdot\rVert$ on any group $G$ generates the metric $d$ on $G$ defined by 
$d(g,h)=\|g^{-1}h\|$. This metric is left-invariant, i.e., $d(xg, xh)=d(x,h)$ for any $x,g,h\in G$, and 
satisfies the condition $d(g^{-1}, h^{-1})= d(g, h)$ for all $g,h\in G$. We denote the completion of the metric 
space $(G,d)$ by $\widehat G$ and call it the \emph{completion of $G$ with respect to the norm}~$\lVert 
\boldsymbol\cdot\rVert$. It is well known that the multiplication and inversion group operations extend 
continuously to  $\widehat G$, so that $\widehat G$ is a topological group containing $G$ as a subgroup (see, 
e.g.,~\cite[pp.~352--353]{THJ}).

\begin{theorem*}
Any countable Hausdorff topological vector space over a finite field or, equivalently, any countable Abelian 
topological group of prime exponent has a closed discrete basis. 
\end{theorem*}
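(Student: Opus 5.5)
We first reduce to the case where the topology of $L$ is generated by a norm. This reduction is harmless: a closed discrete subset of $L$ in a coarser Hausdorff group topology stays closed and discrete in the original topology.

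Since $L$ is countable, enumerate $L\setminus\{\zero\}=\{g_1,g_2,\dots\}$. Construct a sequence of symmetric neighborhoods $U_1\supset U_2\supset\cdots$ of $\zero$ in the original topology. We require $U_{n+1}+U_{n+1}\subset U_n$ and $g_n\notin U_n$, which is possible because $L$ is Hausdorff. The sets $U_n$ then form a base at $\zero$ for a coarser Hausdorff group topology on $L$, with $\bigcap_n U_n=\{\zero\}$. By the standard Birkhoff--Kakutani construction this topology is generated by a norm $\lVert\boldsymbol\cdot\rVert$ in the sense defined above; let $d$ be the associated invariant metric. If $L$ is finite-dimensional, then $L$ is finite, hence discrete, and every basis works. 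So assume $\dim L=\aleph_0$. If $d$ is discrete, the claim is again trivial, so we may also assume $(L,d)$ has no isolated points.

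Next enumerate $L=\{v_1,v_2,\dots\}$ and build the basis $E=\{e_1,e_2,\dots\}$ by induction on $n$. Along the way we choose radii $r_n>0$ and a finite set $E_n=\{e_1,\dots,e_{k_n}\}$ that is linearly independent with $v_1,\dots,v_n\in\langle E_n\rangle$. The key invariant is this: every basis element added at a step $m>j$ satisfies $d(e,v_j)\ge r_j$. At step $n$, if $v_n\in\langle E_{n-1}\rangle$, we add nothing. Otherwise we add two new elements $w$ and $v_n-w$, with $w\notin\langle E_{n-1}\cup\{v_n\}\rangle$, chosen so that both avoid $\bigcup_{j<n}B(v_j,r_j)$. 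The two elements are independent over $E_{n-1}$ and span $v_n$. Then we fix $r_n$.

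Granting this, $E$ is a basis, and it is closed and discrete. Indeed, for every $x=v_j\in L$, the ball $B(v_j,r_j)$ contains only the finitely many basis elements created before step $j+1$. A finite set is closed, so $v_j$ has a neighborhood meeting $E$ in finitely many points, which gives both properties.

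The main obstacle is guaranteeing that a suitable $w$ always exists. The set of forbidden $w$ is
\[
\langle E_{n-1}\cup\{v_n\}\rangle\cup\bigcup_{j<n}B(v_j,r_j)\cup\bigcup_{j<n}\bigl(v_n-B(v_j,r_j)\bigr),
\]
a finite subspace together with $2(n-1)$ balls. We must prevent it from exhausting $L$. I would handle this through the choice of $r_n$: choose each $r_n$ so small that the total "size" of all balls ever used stays below a fixed threshold. Concretely, I would first try requiring $r_n\le \delta/2^{n+2}$ for a fixed $\delta>0$. Then I would argue that $L$ cannot be covered by a finite subspace and finitely many translates of balls of these radii.

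For this I would split into two cases. If $L$ is not totally bounded, a $\delta$-separated infinite set gives the required point directly. If $L$ is totally bounded, the completion $\widehat L$ is compact with Haar measure $\mu$. Since $L$ is dense and has no isolated points, balls of radius $r$ have measure tending to $0$ as $r\to 0$. So I would choose $r_n$ with $\mu\bigl(\overline{B(\zero,r_n)}\bigr)<2^{-n-2}$. The union of the forbidden balls then has measure less than $1$, and being closed it misses a nonempty open subset of $\widehat L$. That open set contains infinitely many points of $L$, so it contains points outside the finite subspace as well.
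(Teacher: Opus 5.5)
Your proof is correct, and it takes a genuinely different route from the paper's.

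\textbf{The paper's route.} After obtaining a continuous norm (via Arhangel'skii's theorem on coarser second-countable group topologies and a Graev norm), the paper builds a greedy ``minimal-norm'' basis $E'$. Property (iii) of $E'$ forces every Cauchy sequence in $E'$ to converge to $\zero$, so $\zero$ is its only possible accumulation point, even in $\widehat L$. The paper then uses the Baire category theorem to pick $a\in\widehat L\setminus L$ and a sequence $a_n\to a$. It shifts consecutive blocks of $E'$ by the elements $a_{f^i(1)}$, which moves that single possible accumulation point to $a\notin L$. Most of the work goes into checking that the shifted family $E''$ is still a basis.

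\textbf{Your route.} You run a diagonal construction over an enumeration of $L$, attaching to each $v_j$ a radius $r_j$ and adding basis vectors in pairs $w$, $v_n-w$ that avoid all earlier balls. Linear independence, spanning, and local finiteness (hence closed discreteness) are then immediate. The whole difficulty moves into a covering lemma: a finite subspace together with the balls $B(v_j,r_j)$ and $v_n-B(v_j,r_j)$, $j<n$, never covers $L$. You settle this with a dichotomy. If $L$ is not totally bounded, an infinite $\delta$-separated set does the job. If it is totally bounded, you use Haar measure on the compact completion, relying on translation and inversion invariance and $\mu(\{\zero\})=0$. Since the case depends only on $(L,d)$, the radii can be fixed before the construction starts, so there is no circularity. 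Your norm reduction, via Birkhoff--Kakutani with neighborhoods $U_n\not\ni g_n$, is also more self-contained than the paper's appeal to network weight.

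\textbf{Trade-off.} Beyond the norm reduction, the paper uses only elementary metric facts (completion and Baire). It also yields structural information about the greedy basis $E'$. Its price is an intricate verification for $E''$. Your argument needs Haar measure in the totally bounded case. In return, its verification is short and transparent, and it shows exactly where finiteness of the field enters (the finite subspace $\langle E_{n-1}\cup\{v_n\}\rangle$ to be avoided) and where invariance of the metric enters.
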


\begin{proof}
Let $L$ be a countable topological $\mathbb F_p$-vector space, and let $E$ be any basis of $L$. We will treat $L$ 
as an Abelian topological group of exponent $p$ and denote its zero element by $\zero$. The network weight of 
any countable topological space is countable, any Hausdorff topological group of countable network weight 
admits a coarser second-countable Hausdorff group topology~\cite[Theorem~2]{Arhangelskii79}, and any 
second-countable Hausdorff group topology is generated by a norm  
(see, e.g., \cite{Graev50}). Thus, there exists a continuous norm $\lVert\boldsymbol\cdot\rVert$ on $L$. 
In what follows, we assume that the topology of $L$ is generated by this norm and construct a basis closed and 
discrete in the norm topology; clearly, it retains these properties in the original finer topology of~$L$.

Consider the set $E'=\{e'_1, e'_2, \dots\}$ defined recursively as follows: 
\begin{itemize} 
\item 
$e'_1= e_1$; 
\item 
if $n\in \mathbb N$ and $e'_1, e'_2, \dots, e'_{n}$ are already defined, then $e'_{n+1}$ is a linear 
combination $\lambda_1 e'_1 +\lambda_2 e'_2 + \dots+ \lambda_n e'_{n}+\lambda_{n+1} e_{n+1}$, where 
$\lambda_{n+1}\ne 0$, of minimum norm (if there are several linear combinations of minimum norm, then we take any 
of them). 
\end{itemize}

It is easy to see that $E'$ has the following properties:
\begin{enumerate}[{\rm(i)}]
\item
for each $n$, the  linear span of $\{e'_1, \dots, e'_n\}$ equals that of $\{e_1, \dots, e_n\}$;
\item
$E'$ is a basis in $L$;
\item
if $n_1<\dots <n_k$, then $\|e'_{n_k}\|\le \|\lambda_1 e'_{n_1}+\dots +\lambda_k e'_{n_k}\|$ for any $\lambda_1, 
\dots, \lambda_k\in \mathbb F_p$. 
\end{enumerate} 

Indeed, (i) is proved by simple induction: for every $n\ge 1$, we have 
$$
e'_n=\lambda_1 e'_1+ \lambda_2e'_2+ \dots +\lambda_{n-1}e'_{n-1}+\lambda_n e_n,
$$ 
where $\lambda_n\ne 0$. Therefore, $(p-\lambda_n)e_n= 
\lambda_1 e'_1+ \lambda_2e'_2+ \dots +\lambda_{n-1}e'_{n-1}+(p-1)e'_n$ and hence 
$e_n$ can be represented as a linear combination of $e'_1, \dots, e'_{n}$. This easily implies~(i).

The set $E'$ is linearly independent, because the decomposition of every $e'_n$ in $E$  
contains $e_n$ with nonzero coefficient, which does not occur in the decompositions of $e'_k$ with $k<n$. Since 
$E$ is a basis of $L$, it follows from (i) that $E'$ spans $L$. This implies~(ii).

Property~(iii) follows directly from the definition of $e'_{n_k}$, because  
$\lambda_1 e'_{n_1}+\dots +\lambda_k e'_{n_k}$ equals a linear 
combination of $e_1, e_2, \dots, e_{n_k}$ (since so does each $e'_{n_i}$), and the coefficient of $e_{n_k}$ 
in this linear combination is nonzero (since $e_{n_k}$ occurs in the decomposition of $e'_{n_k}$ in 
$E$ with nonzero coefficient and does not occur in the decompositions of $e'_k$ with $k<n$). 

\begin{lemma}
\label{l1}
Let $w=\lambda_1 e'_{i_1} + \lambda_2 e'_{i_2} +\dots + \lambda_n e'_{i_n}$\textup, 
where $n, i_1, \dots, i_n\in \mathbb N$\textup, $i_1< i_2< \dots < i_n$\textup, and $\lambda_1, \dots, 
\lambda_n\in \mathbb F_p$\textup. Then 
$$ 
\|\mu e'_{i_{n-k}}\|\le p^k\cdot 2^k\cdot \|w\|\qquad \text{for each}\quad k=0, \dots, n-1\quad \text{and 
any}\quad \mu\in \mathbb F_p. 
$$ 
\end{lemma}

\begin{proof}
We argue by induction on $k$. Note that, for any $\mu \in \mathbb F_p$ and $x\in L$, we have $\|\mu x\|\le p\cdot 
\|x\|$. Thus, the required inequality for $k=0$ follows from property (iii) of $E'$. Suppose that 
$0<l<n$ and the inequality holds for any $k<l$ and $\mu\in \mathbb F_p$. Applying the triangle inequality for 
norms, we obtain 
\begin{multline*}
\|(p-\lambda_{n-l+1})e'_{i_{n-l+1}}+(p-\lambda_{n-l+2})e'_{i_{n-l+2}}+\dots +(p-\lambda_{n})e'_{i_{n}}\|\le \\
\le p^{l-1}\cdot (2^{l-1}+2^{l-2}+\dots+1)\cdot\|w\|= p^{l-1}\cdot (2^l-1)\cdot \|w\|. 
\end{multline*}
Noting that 
\begin{multline*}
\lambda_1 e'_{i_{1}}+\lambda_2 e'_{i_{2}}+\dots +\lambda_l e'_{i_{l}} = \\ = w + 
(p-\lambda_{n-l+1})e'_{i_{n-l+1}}+(p-\lambda_{n-l+2})e'_{i_{n-l+2}}+\dots +(p-\lambda_{n})e'_{i_{n}}
\end{multline*}
and 
applying the triangle inequality once more, we arrive at 
$$
\|\lambda_1 e'_{i_{1}}+\lambda_2 e'_{i_{2}}+\dots +\lambda_l e'_{i_{l}}\|\le 
\|w\|+ p^{l-1}\cdot (2^l-1)\cdot \|w\| \le p^{l-1}\cdot 2^l\|w\|. 
$$
By property (iii) we have $\|e'_{i_{l}}\|\le p^{l-1}\cdot 2^l\|w\|$ and $\|\mu e'_{i_{l}}\|\le p^l \cdot 
2^l\|w\|$. 
\end{proof}

\begin{lemma}
\label{l2}
Each Cauchy sequence in $E'$ converges to~$\zero$.
\end{lemma}

\begin{proof}
Let $(e'_{n_k})_{k\in \mathbb N}$ be a Cauchy sequence. Then, for 
any $\varepsilon >0$, there exists an $N\in \mathbb N$ such that $\|e'_{n_{k'}}+(p-1)e'_{n_{k''}}\|< 
\varepsilon$ for any $k', k''\ge N$. But if $n_{k''} > n_{k'}$, then 
$\|e'_{n_{k''}}\|\le \|e'_{n_{k'}}+(p-1)e'_{n_{k''}}\|$ by property~(iii) of $E'$. 
Therefore, $\|e'_{n_{k}}\|<\varepsilon$ for any $k > N$. 
\end{proof}

Let $\max\colon L\to \{0\}\cup \mathbb N$ be the function defined by $\max(\zero) = 0$ and 
$$
\max (\lambda_1 e'_{n_1} + \dots + \lambda_k e'_{n_k}) = n_k
\quad \text{for $n_1< \dots < n_k$ and $\lambda_1, \dots, \lambda_k\in \mathbb F_p\setminus \{0\}$}.
$$
In particular, $\max(e'_n)=n$ for each $e'_n$.

According to Lemma~\ref{l2}, the set $E' \cup \{\mathbf 0\}$ is closed in the group $L$ and even in its 
completion $\widehat L$ in the norm $\lVert \boldsymbol\cdot\rVert$. This means that either $E'$ 
is the desired closed discrete basis of $L$ or $\mathscr E'$ is nonclosed and $\mathbf 0$ is its only 
limit point in $\widehat L$. Suppose that $\mathscr E'$ is nonclosed. Note that $L$ is a countable 
metric space without isolated points and hence cannot be complete (this follows, e.g., from the Baire category 
theorem for complete metric spaces). Take $a\in \widehat L\setminus L$ and choose a sequence $(a_n)_{n\in 
\mathbb N}$ of pairwise distinct nonzero elements of $L$ converging to $a$ in the metric of the 
completion~$\widehat L$. 

Let $f\colon \mathbb N\to \mathbb N$ be the function defined by 
$$ 
f(i) = \max(a_i) = i_{k(i)} \qquad\text{for $i\in \mathbb N$}. 
$$ 
Passing to a subsequence if necessary, we can assume that the function $f(i)$ strictly increases and $f(1)\ge 2$. 
We also assume for convenience that $a_1=e'_{f(1)}$, $f^{-1}(1)=0$, and $f^0(1)=1$. We set 
\begin{gather*} 
e''_0 = e'_1,\\ 
e''_1 = e'_1+a_{f^0(1)},\ e''_2 = e'_2+a_{f^0(1)},\ \dots, \ e''_{f(1)-1}= e'_{f(1)-1}+a_{f^0(1)},\\ 
e''_{f(1)} = e'_{f(1)} + a_{f(1)},\ e''_{f(1)+1} = e'_{f(1)+1} + a_{f(1)},\ \dots, \ e''_{f^2(1)-1} = 
e'_{f^2(1)-1} + a_{f(1)},\\ e''_{f^2(1)} = e'_{f^2(1)} + a_{f^2(1)}, \ e''_{f^2(1)+1} = e'_{f^2(1)+1} + 
a_{f^2(1)},\ \dots, \ e''_{f^3(1)-1} = e'_{f^3(1)-1} + a_{f^2(1)},\\ \dots,\\ e''_{f^k(1)} = e'_{f^k(1)} + 
a_{f^{k}(1)}, \ \dots, \ e''_{f^{k+1}(1)-1} = e'_{f^{k+1}(1)-1} + a_{f^{k}(1)},\\ \dots\,. \end{gather*} 

Let us show that the set $\mathscr E'' = \{e''_0, e''_1, \dots\}$ is a basis of $L$. 
First, we check its linear independence. Take any linear combination $\lambda_{i_1}e''_{i_1} + \dots + 
\lambda_{i_k}e''_{i_k}$, where $i_1 < \dots < i_k$ and $l_{i_j}\in \mathbb F_p\setminus\{0\}$ for $1\le j\le k$. 
We must show that it does not vanish. For $n\ge -1$, let 
$$ 
F_n=\bigl\{l\le k: i_l\in \{f^{n}(1), \dots, f^{n+1}(1)-1\}\bigr\}, 
$$ 
and let $k\in F_m$. Note that every $l\le k$ belongs to $F_r$ for some $r\le m$. Note also that, for any $r\le m$ 
and $l\in F_r$, we have 
$$
\max(e''_{i_l})=\max(e'_{i_l}+a_{f^r(1)})\quad\text{and}\quad \max(a_{f^r(1)})=f^{r+1}(1) > i_l=\max(e'_{i_l});
$$
therefore, 
$$
\max(\lambda_{i_l}e''_{i_l}) = f^{r+1}(1)\le f^m(1)\qquad\text{for all $l\notin F_m$, $l\le k$}.
\eqno{(*)}
$$ 
Thus, $e'_{i_k}$ does not appear in the  decomposition of $\sum_{l\notin F_m}\lambda_{i_l} e''_{i_l}$ in $E'$. 

If $\sum_{i\in F_m} \lambda_{i_l}=0$ in $\mathbb F_p$, then $m>-1$ and 
$$ 
\sum_{l\in F_m}\lambda_{i_l} e''_{i_l} = \sum_{l\in F_m} \lambda_{i_l} (e'_{i_l}+a_{f^{m}(1)}) = \sum_{l\in F_m} 
\lambda_{i_l}e'_{i_l} \ne \mathbf 0, 
$$ 
because $\mathscr E'$ is linearly independent and the $i_l$ are pairwise distinct. We have 
$$ 
\max \Bigl(\sum_{l\in F_m} \lambda_{i_l} e'_{i_l}\Bigr)= i_k > f^{m}(1). 
$$ 
Indeed, it follows from $\sum_{i\in F_m} \lambda_{i_l}=0$ that $F_m$ contains at least two elements. Therefore, 
$k-1\in F_m$, so that $i_{k-1}\ge f^{m}(1)$ and hence $i_k > f^{m}(1)$. Thus, the  decomposition of $\sum_{l\in 
F_m} e''_{i_l}$ in the basis $E'$ contains $e'_{i_k}$ with some nonzero coefficient. On the other hand, 
$e'_{i_k}$ does not appear in the  decomposition of $\sum_{l\notin F_m}\lambda_{i_l} e''_{i_l}$ in view of $(*)$, 
so that the linear combination $\lambda_{i_1}e''_{i_1} + \dots + \lambda_{i_k}e''_{i_k}$ does not vanish.

Suppose that $\lambda=\sum_{l\in F_m}\lambda_{i_l}\ne 0$ and $m>-1$. Then 
$$ 
\sum_{l\in F_m}\lambda_{i_l} e''_{i_l} = \sum_{l\in F_m}\lambda_{i_l} (e'_{i_l}+a_{f^{m}(1)}) = \sum_{l\in 
F_m}\lambda_{i_l} e'_{i_l} + \lambda a_{f^{m}(1)}.  
$$ 
Since 
$$ 
\max \Bigl(\sum_{l\in F_m}\lambda_{i_l} e'_{i_l}\Bigr)\le f^{m+1}(1)-1\quad \text{and}\quad 
\max(\lambda a_{f^m(1)})= \max(a_{f^m(1)})f^{m+1}(1),   
$$ 
it follows that 
$$ 
\max \Bigl(\sum_{l\in F_m}\lambda_{i_l} e''_{i_l}\Bigr)= \max(\lambda a_{f^{m}(1)}) = f^{m+1}(1). 
$$ 
Therefore, the decomposition of $\sum_{l\in F_m} e''_{i_l}$ in $E'$ contains the element $e'_{f^{m+1}(1)}$ with 
some coefficient, which does not appear in the decomposition of $\sum_{l\notin F_m} e''_{i_l}$ in view of $(*)$.  
Thus, the linear combination $\lambda_{i_1}e''_{i_1} + \dots + \lambda_{i_k}e''_{i_k}$ does not vanish in the 
case where $\sum_{l\in F_m}\lambda_{i_l}\ne 0$ and $m>-1$ either. 

Finally, if $m=-1$, then $F_m=\bigl\{l\le k:i_l\in \{0\}\bigr\}$ and $e''_{i_k}=e''_0$, so that the linear 
combination under consideration is $\lambda_{i_k}e''_0=\lambda_{i_k} e'_1\ne\mathbf 0$. 

We have proved the linear independence of the set $\mathscr E''$. Let us show that all elements of $L$ 
are linear combinations of elements of $\mathscr E''$. It suffices to check that every $e'_n\in E'$ 
is a linear combination of elements of $E''$. We argue by induction on $n$. For $n=1$, this is true. Suppose that 
$n>1$ and we have already represented all $e'_k$, $k<n$, as linear combinations of elements of $E''$. Let $i$ 
be the greatest integer for which $n\ge f^i(1)$. Then $i\ge 0$ and $n=f^i(1)+j$ for some nonnegative integer $j< 
f^{i+1}(1)-f^i(1)$. Suppose that $j=0$. If $i=0$, then $n=1$ and $e'_1= e''_0$ by construction. Suppose that 
$i>0$. Then $n=f^i(1)$ and 
$$ 
e'_n = e'_{f^i(1)} = e''_{f^i(1)} + a_{f^i(1)} = e''_n+ a_n. 
$$ 
Note that 
$$ 
a_n = \sum_{k=1}^{f(n)} \lambda_k e'_k,\quad \text{where $\lambda_k\in \mathbb F_p$ and $\lambda_{f(n)}\ne 0$}, 
$$ 
and $f(n)=f^{i+1}(1)$. For each $k\in \{f^i(1),\dots, f^{i+1}(1)-1\}$, we have $e'_k=e''_k+a_{f^i(1)}$ by 
construction, and for $k< f^i(1)$, $e'_k$ can be expressed in terms of $\mathscr E''$ by the induction 
hypothesis. Therefore, 
$$ 
a_n = \sum_{k=1}^{f^i(1)-1} \mu_k e''_k + \sum_{k=f^i(1)}^{f^{i+1}(1)} \lambda_k (e''_k + a_n) = 
\sum_{k=1}^{f^i(1)-1} \mu_k e''_k + \sum_{k=f^i(1)}^{f^{i+1}(1)} \lambda_k e''_k + \sum_{k=f^i(1)}^{f^{i+1}(1)} 
\lambda_k a_n, 
$$ 
where $\mu_k,\lambda_k\in \mathbb F_p$ and $\lambda_{f^{i+1}(1)}\ne 0$. If $\sum_{k=f^i(1)}^{f^{i+1}(1)} 
\lambda_k=1$ in $\mathbb F_p$, then 
$$ 
\sum_{k=f^i(1)}^{f^{i+1}(1)} \lambda_k a_n= a_n \quad \text{and}\quad 
\sum_{k=1}^{f^i(1)-1} \mu_k e''_k + \sum_{k=f^i(1)}^{f^{i+1}(1)} \lambda_k e''_k =\mathbf 0, 
$$ 
which contradicts the linear independence of $\mathscr E''$, because the coefficient $\lambda_{f^{i+1}(1)}$ in 
this linear combination is nonzero. Therefore, $\lambda = \sum_{k=f^i(1)}^{f^{i+1}(1)} \lambda_k\ne 1$ and we 
have 
$$ 
(\lambda-1) a_n= \sum_{k=1}^{f^i(1)-1} \mu_k e''_k + \sum_{k=f^i(1)}^{f^{i+1}(1)} \lambda_k e''_k, 
$$ 
which gives a linear expression for $a_n$ in terms of elements of $E''$. Now suppose that $j=0$. By the induction 
hypothesis $e'_{f^i(1)}$ can be represented as a linear combination of elements of $E''$. Since 
$e''_{f^i(1)} = e'_{f^i(1)}+a_{f^i(1)}$, it follows that $a_{f^i(1)}$ and hence all elements 
$e'_{f^i(1)}$, \dots, $e'_{f^{i+1}(1)-1}$ (including $e'_n$) can also be represented in this way. 

Thus, $E''$ is a basis. Let us show that it is closed and discrete in $L$. Suppose that it has a limit point $b$  
in $L$. Then there exists a sequence $(e''_{n_k})_{k\in \mathbb N}$ of pairwise distinct elements of $E''$ 
converging to $b$. By construction this sequence has the form $(e'_{n_k}+a_{m_k})_{k\in \mathbb N}$, where 
$m_k\ge n_k$ for $k\in \mathbb N$ and all $n_k$ are pairwise distinct. Passing to a subsequence if needed, we can 
assume that all $m_k$ are pairwise distinct as well. Note that $(e'_{n_k})_{k\in \mathbb N}$ is a Cauchy 
sequence, because so are $(e'_{n_k}+a_{m_k})_{k\in \mathbb N}$ and $(a_{m_k})_{k\in \mathbb N}$. Therefore, 
$e'_{n_k}\to \zero$ as $k\to \infty$ by Lemma~\ref{l2}. Since $a'_{m_k}\to a$ as $k\to \infty$, it follows that 
$e''_{n_k}\to a\notin L$. This contradiction shows that $E''$ has no limit poins in $L$, as required.
\end{proof}

\begin{remark}
Uncountable topological $\mathbb F_p$-vector spaces do not necessarily contain a closed discrete basis. For 
example, the Tychonoff product $\mathbb F_p^\kappa$ for any cardinal $\kappa$ is a compact topological $\mathbb 
F_p$-vector space, and it cannot contain infinite closed discrete sets. This product is finite for finite 
$\kappa$, and it cardinality for infinite $\kappa$ is at least the continuum. 
\end{remark}

\end{document}